\newcommand{\IR}{\mathbb{R}}
\newcommand{\question}[1]{\leavevmode{\marginpar{\tiny%
$\hbox to 0mm{\hspace*{-0.5mm}$\leftarrow$\hss}%
\vcenter{\vrule depth 0.1mm height 0.1mm width \the\marginparwidth}%
\hbox to 0mm{\hss$\rightarrow$\hspace*{-0.5mm}}$\\\relax\raggedright #1}}}
\renewcommand{\O}{\mathrm{O}}
\renewcommand{\a}{\alpha}
\renewcommand{\b}{\beta}
\newcommand{\ct}{\gamma}
\renewcommand{\d}{\delta}
\newcommand{\e}{\epsilon}
\newcommand{\IBB}{\mathscr{B}}
\newcommand{\ICC}{\mathsf{C}^{\infty}}
\newcommand{\IL}{\mathsf{L}}
\newcommand{\Id}{{\rm d}}
\newcommand{\f}{\frac}
\newcommand{\nn}{\nonumber}
\newcommand {\rr}{\mathbb{R}}
\newcommand {\grad}{\mathrm{grad}}
\newcommand{\sect}{\mathrm{Sec}}
\newcommand{\ric}{\mathrm{Ric}}
\theoremstyle{plain}            
\newtheorem{theorem}{theorem}[section]
\newtheorem{Lemma}[theorem]{Lemma}
\newtheorem{Corollary}[theorem]{Corollary}
\newtheorem{Theorem}[theorem]{Theorem}
\theoremstyle{definition}
\newtheorem{Remark}[theorem]{Remark}
\newtheorem{Notation}[theorem]{Notation}
\begin{document}

\begin{titlepage}
\title{Quantitative $\mathsf{C}^1$-estimates on manifolds}

\author[B. G\"uneysu]{Batu G\"uneysu}
\address{Batu G\"uneysu, Institut f\"ur Mathematik, Humboldt-Universit\"at zu Berlin, 12489 Berlin, Germany} \email{gueneysu@math.hu-berlin.de}

\author[S. Pigola]{Stefano Pigola}
\address{Stefano Pigola, Dipartimento di Scienza e Alta Tecnologia - Sezione di Matematica, Università dell'Insubria, 22100 Como, Italy} \email{stefano.pigola@uninsubria.it}

\end{titlepage}

\keywords{Quantitative elliptic estimates, Poisson equation, Sobolev harmonic radius, sectional curvature}

\subjclass[2010]{53C21, 35B45}

\maketitle

\begin{abstract} We prove a $\mathsf{C}^1$-elliptic estimate of the form
\[
\sup_{B(x,r/4)} |\grad (\psi) | \leq \frac{C}{\min \Big( 1, r, (\| \mathrm{Sec}_g \|_{\mathsf{L}^{\infty}(B_g(x,r))} +\epsilon)^{-1/2} \Big)}  \left\{ \sup_{B(x,r/2)} | f | + \sup_{B(x,r/2)} |\psi| \right\},
\]
valid on any complete Riemannian manifold $M$ and for any smooth solution of the Poisson equation $\Delta \psi = f$ which is defined in a neighbourhood of the geodesic ball $B(x,r)$. Above, $C$ is a constant which only depends on $\dim(M)$ and $\epsilon >0$ is arbitrary. In case of global solutions, the estimate  is sensitive of the curvature growth on large balls and can be applied to deduce global results such as the zero-mean value property of $f$ as in the compact setting.
\end{abstract}

\section{Introduction and main result}

\setcounter{page}{1}

\subsection{Some notation}

Given a smooth Riemannian manifold $(M,g)$ we denote with
$$
\Id:\ICC(M)\longrightarrow  \Omega^1_{\ICC}(M)
$$
the usual exterior derivative, with $r_{\mathrm{inj},g}(x)$ the injectivity radius, and with $\mathrm{Sec}_g$ the sectional curvature, and with $\mathrm{Ric}_g$ the Ricci tensor. Likewise, $\Id_g(x,y)$ will denote the geodesic distance and $B_g(x,r)$ the induced open geodesic balls, and we write $\mu_g(\Id x)$ for the volume measure. All smooth fiber metrics that are induced by $g$ will be denoted with $(\bullet,\bullet)_g$, and the corresponding fiber norms with $|\bullet|_g$. It follows that for every smooth function $\psi:U\to\IR$ which is defined on some open subset $U\subset M$ one has $|\Id \psi|_g=|\mathrm{grad}_g (\psi) |_g$, where $\mathrm{grad}_g (\psi) \in \Gamma_{\ICC}(U, TM)$ denotes the Riemannian gradient. Locally, one has
\begin{align*}
&\Id\psi=\sum_i (\partial_i\psi )\Id x^i,\>\>\mathrm{grad}_g (\psi)=\sum_i \sum_j(g^{ij}\cdot\partial_j\psi) \partial_i,\>\>|\Id\psi|_g^2=\sum_{ij}g^{ij}\cdot\partial_i\psi\cdot\partial_j\psi.
\end{align*}
The divergence $\mathrm{div}_g(X)\in\mathsf{C}^{\infty}(U)$ of a smooth vector field $X:U\to TM$ is locally defined by
$$
\mathrm{div}_g(X)=\f{1}{\sqrt{\det(g)}}\sum_i\partial_i(\sqrt{\det(g)}X^i),\quad \text{ if $X=\sum_i X^i \partial_i$ in $U$.}
$$

The scalar (by convention negative-definite) Laplace-Beltrami operator will be denoted with 
$$
\Delta_g=-\Id^g\Id= \mathrm{div}_g\circ\mathrm{grad}_g.
$$
 Locally, 
$$
\Delta_g\psi=\f{1}{\sqrt{\det(g)}}\cdot\sum_{ij}\partial_i \left(\sqrt{\det(g)}\cdot g^{ij}\cdot\partial_j\psi\right),
$$
and in case the given coordinate system is (componentwise) $\Delta_g$-harmonic, the latter expression boils down to
$$
\Delta_g\psi=\sum_{ij}g^{ij}\cdot\partial_i\partial_j\psi.
$$

\subsection{Introductory observations}
In this paper we will be concerned with $\mathsf{C}^1$-elliptic estimates of the form
\[
\sup_{B_g(x,r/2)} |\Id\psi |_g \leq C \left\{ \sup_{B_g(x,r)} |\Delta_g \psi| + \sup_{B_g(x,r)} |\psi| \right\},
\]
for any smooth function $\psi$ on $M$, where we want to make explicit the dependence of the constant $C=C({B_g(x,r)})$ on the local geometry of $(M,g)$, and in particular on the radius $r$. As we will make clear later on (cf. Corollary \ref{anw} below), a precise control on these quantities is useful for many applications in geometric analysis, for explain, to deduce that the existence of a solution of the Poisson equation $\Delta_g \psi = f$ implies the zero-mean value property
$$
\int_{M} f \, \Id\mu _{g} =0
$$
even in the complete non-compact setting, provided  $\psi$, $f$ and the geometry of $(M,g)$ satisfy certain growth assumptions.\smallskip

We would like to stress that if one is only interested in a qualitative $\mathsf{C}^1$-elliptic estimate (without an explicit control of the constant $C$ above), it is straightforward to achieve such an estimate by locally applying classical interior Hölder estimates for elliptic operators.
Other possible approaches that lead to global qualitative bounds are based on iteration techniques and will be discussed in the next section when we will make a comparison with our result.\smallskip

A possible way to obtain a local quantitative $\mathsf{C}^1$-elliptic estimate would be to combine Cheng-Yau\rq{}s gradient estimate \cite{cheng} for positive harmonic functions with Green\rq{}s kernel estimates on balls. Very quickly: to a given smooth function $\psi$ on $B_{g}(x,2R_{0})$ we can associate the harmonic function $u$ defined by
\[
u(y) = \psi(y) + \int_{B_{g}(x,2R_{0})} \Delta_g \psi (z) G_{2R_{0}}(y,z)\Id \mu_g(z)
\]
where $G_{2R_{0}}(y,z)$ denotes the Green\rq{}s kernel of $B_{g}(x,2R_{0})$. If we apply the Cheng-Yau gradient estimate to the positive harmonic function $u_{\varepsilon}(y) = \sup_{B_g(x,R_{0})}|u|-u(y)+\varepsilon$ on the smaller ball $B_g(x,R_{0})$ and then we let $\varepsilon \to 0$ we obtain that
\[
\sup_{B_g(x,R_{0}/2)}|\Id u|_g \leq 2C(m,K,R_{0}) \sup_{B_g(x,R_{0})}|u|,
\]
where the constant $C$ is completely explicit. Whence, the validity of  the desired $\mathsf{C}^1$-estimate of $\psi$  follows provided we are able to produce a quantitative control of  the function
\[
y \mapsto \max\Big(\int_{B_{g}(p,2R_0)}|\Id_y G_{2R_0}(y,z)|_g \Id\mu_g( z)\>,\> \int_{B_{g}(p,2R_0)}G_{2R_0}(y,z) \Id \mu_g(z) \Big).
\]
We are going to prove the validity of a  $\mathsf{C}^1$-estimate where the constant depends explicitly on the ray and on the absolute bound of the sectional curvature on the corresponding ball, and via a direct method that does no require the use of the Green function.
\smallskip

Before we state our main result, we find it instructive to take a look first at the Euclidean case $M=\IR^m$ with its standard metric, in order to see how the optimal dependence on the constant on $r$ should look like: If $m=1$, it is straightforward to prove that, for every $a>0$, $0<r\leq 1$, and every smooth $u:I_{2r}(a)\to\IR$ one has 
\begin{equation}\label{elementary}
|u'(a)| \leq \frac{2}{r} \left\{ \sup_{I_{r}(a)} |u| + \sup_{I_{r}(a)} |u''|\right\},
\end{equation}
where $I_r(a):= (a-r,a+r)$. Indeed, set $\tilde{a}=a-r$, $\tilde{b}=a+r$ and define $v(x) = u(x\tilde{b} + (1-x)\tilde{a})$ for $x\in [0,1]$. Then, from the Taylor expansion with uniform reminder we have:
\[
v(x) = v(0) + v'(0) x + x^{2}\int_{0}^{1}(1-s)v''(xs)ds.
\]
By evaluating this latter at $x = 1/2$ and replacing the expression of $v$ we obtain:
\begin{align*}
u'(\tilde{a}) &=  \frac{2}{ \tilde{b}-\tilde{a} }\{ u \big(\tilde{b}/2+\tilde{a}/2\big) - u(\tilde{a})\} \\
 &- \frac{\tilde{b}-\tilde{a}}{2} \int_{0}^{1}(1-s)u''\big(s\tilde{b}/2+(2-s)\tilde{a}/2\big) ds.
\end{align*}
Whence we get the estimate
\[
| u'(\tilde{a}) | \leq \frac{4}{\tilde{b}-\tilde{a}} \max_{[\tilde{a},\tilde{b}]} |u| + \frac{\tilde{b}-\tilde{a}}{4} \max_{[\tilde{a},\tilde{b}]}|u''|.
\]
Recalling that $\tilde{a}=a-r$, $\tilde{b}=a+r$, the latter implies the validity of \eqref{elementary}.\\
For arbitrary dimensions $m \geq 2$, let $\psi: B(x,2R_0)\subset \rr^{m} \to \IR$ be a smooth function, $R_0>0$ fixed, 
$$
y=(y_1,\dots,y_m),\>x=(x_1,\dots,x_m)\in \IR^m\>\>\>\text{with $y\in B(x,R_0/2)$.}
$$
Applying \eqref{elementary} to 
$$
u:=\psi(y_1,\dots,y_{j-1},\bullet,y_{j+1},\dots,y_m)
$$
with  $a=y_j$ and $r= \min (R_0/2\sqrt{m},1)$, so that the closed $r$-cube around $y$ lies in $B(y,R_0/2)$, implies that, for some $C(m)>0$,
$$
|\Id\psi(y)| \leq \frac{C(m)}{  \min (R_0/2\sqrt{m},1)} \left\{\sup_{B(y,R_0/2)}|\psi|+\sup_{B(y ,R_0/2)}|\Delta\psi|\right\}.
$$
Therefore, using $B(y,R_0/2)\subset B(x,R_0)$, we have shown
\begin{align}\label{aposs}
\sup_{B(x,R_0/2)}|\Id\psi(y)|  \leq \frac{C(m)}{\min (R_0/2\sqrt{m},1)} \left\{\sup_{B(x,R_0)}|\psi|+\sup_{B(x,R_0)}|\Delta\psi|\right\}.
\end{align}

\subsection{Statement of the main result of the paper}
Let us now return to the general case. The following uniform inequality, which is valid on every complete Riemannian manifold, is the main result of this paper:

\begin{Theorem}\label{est}
For every  $m\geq 2$, there exists a universal constant $C=C(m)>0$, such that for
\begin{itemize}
\item [-] all smooth complete Riemannian manifolds $(M,g)$ with $\dim (M) = m$,
\item [-] all $A_{1},A_{2} >0$, $R_{0}> 0$, $x\in M$ with
\begin{equation}\label{curv-conditions2}
\sect_g\leq A_{1},\quad \mathrm{Ric}_g\geq -A_2\text{  on }B_g(x,R_{0}),
\end{equation}
\item [-] and all $\psi\in \ICC(B_{g}(x,R_{0}))$,
\end{itemize}
one has the bound
\begin{align}\label{uniform-estimate}
\sup_{B_{g}(x, R_0/4)}|\Id \psi|_g\leq \frac{C}{\min \Big( 1, \min(  \pi/\sqrt{A_{1}} , R_{0})/2 , \sqrt{1/A_2} \Big)} \Big(\sup_{ B_g(x,R_0/2)} \left|\Delta_g \psi\right|+\sup_{ B_g(x,R_0/2)}\left| \psi\right|\Big).
 \end{align}
\end{Theorem}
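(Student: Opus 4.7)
The plan is to cover $B_g(x,R_0/4)$ by harmonic coordinate charts whose size is dictated by the curvature hypotheses, and, in each chart, to reduce the Riemannian problem to the elementary Euclidean estimate \eqref{aposs}. The reason for using harmonic coordinates rather than normal ones is that the first-order Christoffel terms disappear from $\Delta_g$, leaving $\Delta_g = g^{ij}\partial_i\partial_j$, so the Euclidean argument adapts to a uniformly elliptic non-divergence operator with bounded coefficients without any structural loss.

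Set $\rho \deff \min\bigl(1,\tfrac{1}{2}\min(\pi/\sqrt{A_1},R_0),1/\sqrt{A_2}\bigr)$, so that $1/\rho$ is exactly the multiplicative prefactor appearing on the right of \eqref{uniform-estimate}. Rauch's comparison theorem applied with $\sect_g\leq A_1$ places the first conjugate point of each $y\in B_g(x,R_0/4)$ at distance at least $\pi/\sqrt{A_1}$, so $\exp_y$ is a local diffeomorphism there, while the Ricci lower bound $\ric_g\geq-A_2$ provides Bishop--Gromov volume non-collapsing on small balls. Combining these ingredients via a now-standard harmonic-radius construction (in the spirit of Jost--Karcher or Anderson--Cheeger) one obtains, after a dimensional shrinking by a factor $c_1\in(0,1)$, a harmonic chart $\varphi_y: B_g(y,c_1\rho)\to\IR^m$ in which the pushed-forward metric obeys two-sided bounds $c_2^{-1}\delta_{ij}\leq g_{ij}\leq c_2\delta_{ij}$, with $c_2=c_2(m)$. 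Since $\rho\leq R_0/4$, the chart sits inside $B_g(x,R_0/2)$, where the curvature assumptions and the sup norms on the right of \eqref{uniform-estimate} are controlled.

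Read in this chart, $\Delta_g\psi=f$ becomes $g^{ij}\partial_i\partial_j\psi=f$, a uniformly elliptic non-divergence equation with bounded coefficients on a Euclidean ball of radius comparable to $\rho$. Running the one-dimensional Taylor-remainder argument of \eqref{elementary}--\eqref{aposs} along each Cartesian direction and using the $C^0$ control of the coefficients to pass from individual second partials to the composite operator yields
\[
\sup_{B(0,c_1\rho/2)}|\partial\psi| \leq \frac{C(m)}{\rho}\Big(\sup_{B(0,c_1\rho)}|\psi| + \sup_{B(0,c_1\rho)}|f|\Big),
\]
the extra $\rho^2$ factor in front of $|f|$ being absorbed using $\rho\leq 1$. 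The bi-Lipschitz bounds on $\varphi_y$ then convert Euclidean sup-norms into Riemannian ones on $B_g(y,c_1\rho/2)$ and $|\partial\psi|$ into $|\Id\psi|_g$, at the cost of a purely dimensional constant. Taking the supremum over $y\in B_g(x,R_0/4)$, and using that $B_g(y,c_1\rho)\subset B_g(x,R_0/2)$, produces \eqref{uniform-estimate}.

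The main obstacle is the first step: producing a harmonic chart of the claimed size and with explicit $C^0$ metric control under the asymmetric hypothesis $\sect_g\leq A_1$, $\ric_g\geq -A_2$ rather than the more usual two-sided Ricci bound combined with a prescribed injectivity radius. Technically this forces one to combine Rauch comparison (for the conjugate radius), volume non-collapsing plus a Cheeger-type argument (to upgrade conjugate radius control to genuine injectivity of the chart), and a quantitative elliptic regularity step for the coordinate functions $\Delta_g x^k=0$, with all constants traced explicitly in terms of $A_1$, $A_2$, $R_0$ and $m$. Once Step 1 is granted, the rest is essentially a Euclidean repeat of the one-dimensional Taylor argument already carried out in the introduction.
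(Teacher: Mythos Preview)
Your outline has the right flavor, but there is a genuine gap at precisely the point you yourself flag as ``the main obstacle'': Step~1 cannot be carried out as stated. The Anderson--Cheeger (or Jost--Karcher) harmonic radius bound requires a lower bound on the \emph{injectivity radius} in addition to the Ricci lower bound, and the hypotheses $\sect_g\leq A_1$, $\ric_g\geq -A_2$ do \emph{not} yield any such bound. Rauch gives only a conjugate-radius estimate; there may still be arbitrarily short closed geodesics through $y$. Your appeal to ``Bishop--Gromov volume non-collapsing'' is incorrect: Bishop--Gromov under a Ricci lower bound gives a volume \emph{upper} bound (monotonicity of the ratio), not a lower bound, so it supplies no non-collapsing input to a Cheeger-type argument. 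A collapsing sequence such as thin flat tori satisfies the curvature hypotheses uniformly while the injectivity radius, and hence the harmonic radius on $M$, goes to zero. This is exactly why the paper does \emph{not} work in harmonic coordinates on $M$: it first lifts via $\exp_x$ to the tangent space equipped with the pull-back metric (Lemma~\ref{localisometry}), where the injectivity radius at $\bar x$ is \emph{manufactured} to be $\geq R=\tfrac12\min(\pi/\sqrt{A_1},R_0)$, and only then applies Anderson--Cheeger (Lemma~\ref{cheeger}) on $(\bar M,\bar g)$. The paper makes this point explicitly in the introduction: working directly in harmonic coordinates on $M$ ``automatically leads to a dependence of the constant on the injectivity radius of $g$, which is a much weaker result.''

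There is a second, smaller gap in Step~2. Even granting a harmonic chart with the $\mathsf{C}^0$ metric control you state, the one-dimensional Taylor trick \eqref{elementary} bounds $\partial_j\psi$ in terms of $\sup|\psi|$ and $\sup|\partial_j^2\psi|$, not $\sup|g^{ij}\partial_i\partial_j\psi|$; passing from the full operator to individual second partials with only $\mathsf{C}^0$ coefficient control is not elementary (it is essentially a Calder\'on--Zygmund step and generally fails for merely bounded coefficients). The paper handles this by using the $\mathsf{W}^{1,p}$ accuracy of the harmonic chart, which via Morrey (Lemma~\ref{lemma-Morrey}) gives H\"older continuity of $g^{ij}$, and then invokes the interior $\mathsf{W}^{2,q}$ estimate of Lemma~\ref{him2} together with Sobolev embedding, rather than the Taylor argument.
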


\begin{Remark}\label{est2}
The dependence on the geometry around the selected point can be made more transparent as follows, where we use the notation $T^{+} = \max(T,0)$ and $T^{-} = \min(T,0)$ for any real number $T$:\smallskip

Let $(M,g)$ be a smooth complete, $m$-dimensional Riemannian manifold. For every $R_0>0$, $x \in M$, let
 \[
 \sigma_{g}(x,R_0) :=  \sup_{B_{g}(x,R_0)} \sect_{g}\in \rr,\quad \rho_g(x,r) = \inf_{y\in B_{g}(x,R_0)}\min\mathrm{spec}(\ric_{g}(y))\in \rr.
 \]
 Then, for every $\epsilon>0$ we can define
 \[
 A_1:= \max( \sigma_g(x,R_0)^{+},\epsilon),\quad  A_2:=-\min(\rho_g(x,R_0)^{-}, -\epsilon)
 \]
so that one has the validity of \eqref{curv-conditions2}.
\end{Remark}

The very remarkable fact about  estimate \eqref{uniform-estimate} is that the constant depends completely explicitly on the local geometry of $(M,g)$. In addition, the estimate is of global nature in the sense that the behaviour of the constant is sensitive of the (sectional) curvature growth on large balls. This kind of result is not accessible by De Giorgi-Nash-Moser iteration techniques that typically produce very implicit constants with a huge dependence, say exponential or quadratic exponential, on the parameters. Moreover, since the Moser technique involves the Sobolev constant (that needs to be estimated), it gives rise naturally to results that are much more local than global in their nature. The obvious exception is the case of a uniform curvature bound: a uniform local estimate is in fact global. The reader may consult the book \cite{Li-book}, and the very recent preprint \cite{Zhang-Zhu} where elliptic estimates are obtained via iteration methods in the smooth metric measure space setting.\smallskip

On the other hand, the drawback of our technique is that it really needs a double-sided control on the sectional curvature, even when this latter may be assumed to be uniform. In contrast, as it is visible from the above references, the iteration technique requires just a lower uniform bound on the Ricci curvature. Thus, both techniques are relevant.
\smallskip

Let us make some comments on our proof of Theorem \ref{est}, which will be given in Section \ref{beweis}. Our approach has a very geometric nature and is as follows: Using the locally required curvature bounds, we construct a certain surjective local isometry 
\begin{align}\label{liss}
B_{\bar g}(\bar x, R)\longrightarrow B_{ g}( x, R),\>\>R:= \min(1/\sqrt{A_1},R_0)/2 ,
\end{align}
where $(\bar M, \bar g)$ is another Riemannian manifold satisfying $r_{\mathrm{inj},g}(\bar x) \geq R$, and $\bar x\in \bar M$, so that it essentially remains to prove an estimate for $B_{\bar g}(\bar x, R)$ with respect to $\bar g$.
In the latter case, we pick $\mathsf{W}^{1,p}$-harmonic coordinates around $\bar x$, where $\Delta_{\bar g}$ takes the simple form $\Delta_{\bar g}=\sum_{ij }\bar g^{ij}\partial_i\partial_j$. This makes it possible to use interior elliptic $\IL^q$-estimates in combination with the Sobolev embedding theorem to estimate the gradient of $\psi$ lifted to $B_{\bar g}(\bar x, R)$, where now the constant depends explicitely on the harmonic radius of $\bar g$ in $\bar x$. On the other hand, by a well-known result of M. Anderson and J. Cheeger, \cite{cheeger-a}, the latter can be controlled explicitely by the Ricci curvature and the injectivity radius of $\bar g$ in $\bar x$, and we can control the latter explicitely in terms of $A_1, A_2, R_0$. Let us point out that working directly in harmonic coordinates with respect to $g$ at $x$ without lifting everything, and following from thereon the same lines of thought as above, automatically leads to a dependence of the constant on the injectivity radius of $g$, which is a much weaker result. The idea of using a lifting procedure in order to avoid assumptions on the injectivity radius can be traced back to a paper of J. Cheeger and M. Gromov \cite{cheeger-g}, where it has been used to smoothing Riemannian distance functions. The use we make of this idea in the present paper inspires to the recent \cite{Carron-ASNS}, where applications to critical metrics, $\varepsilon$-regularity results and geometric rigidity are presented. The authors would like to thank G. Carron for having pointed out this reference. \vspace{2mm}

As an application of our main result we point out the following
\begin{Corollary}\label{anw}
Let $(M,g)$ be a smooth geodesically complete Riemannian manifold of dimensional $m=\dim (M)\geq 2$, and assume that there exist numbers $\a,\b\in [0,\infty)$ and a point $o\in M$ such that
 \begin{itemize}
\item [(a)] $\mu_g\big(B_g(o,2R) \setminus B_g(o,R)\big) = \O(R^{\a}\log(R))$ (as $R\to\infty$),
 \item [(b)] $\| \mathrm{Sec}_g \|_{\mathsf{L}^{\infty}(B_g(o,R))} = \O(R^{\b}\log(R))$.
\end{itemize}
Assume further that $u \in \mathsf{C}^{\infty}(M)$, that $\int_{M} \Delta_g u   \, \Id\mu _{g}$ exists\footnote{that is, if one has $(\Delta_g u)_+\in \mathsf{L}^1(M,g)$ or $(\Delta_g u)_-\in \mathsf{L}^1(M,g)$} and that there are numbers $ \ct,\d\in [0,\infty)$ such that
\begin{itemize}
 \item [(i)] $\| u \|_{\mathsf{L}^{\infty}(B_g(o,R))} = \O(R^{\ct}\log(R))$,
 \item [(ii)] $\| \Delta_g u \|_{\mathsf{L}^{\infty}(B_g(o,R))} = \O(R^{\d}\log(R))$,
 \item [(iii)] $\a+\b+\ct+\d<1$.
\end{itemize}
Then one has
\[
\int_{M} \Delta_g u   \, \Id\mu _{g} =0.
\]
\end{Corollary}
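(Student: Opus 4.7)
The strategy is a cutoff argument combined with the local gradient bound of Theorem \ref{est}. Pick once and for all a smooth $\chi:[0,\infty)\to [0,1]$ with $\chi\equiv 1$ on $[0,1]$, $\chi\equiv 0$ on $[2,\infty)$ and $|\chi'|\leq 2$, and for each $R>0$ set $\phi_R(x) := \chi(\Id_g(o,x)/R)$. Since $\Id_g(o,\cdot)$ is $1$-Lipschitz, $\phi_R$ is a compactly supported Lipschitz function with $|\Id\phi_R|_g \leq 2/R$ almost everywhere, $\phi_R\equiv 1$ on $B_g(o,R)$ and $\mathrm{supp}\,\phi_R\subset\overline{B_g(o,2R)}$. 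Approximating $\phi_R$ by smooth compactly supported functions in $\mathsf{W}^{1,\infty}$ and using that $u$ is smooth, standard integration by parts yields
\[
\int_M \phi_R\,\Delta_g u\,\Id\mu_g = -\int_M (\Id\phi_R,\Id u)_g\,\Id\mu_g.
\]
Whichever of $(\Delta_g u)_\pm$ lies in $\mathsf{L}^1(M,g)$ provides a dominating function, so by Lebesgue dominated convergence the left-hand side tends to $\int_M \Delta_g u\,\Id\mu_g$ as $R\to\infty$. It therefore suffices to prove that the right-hand side tends to $0$.

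Estimating crudely,
\[
\left| \int_M (\Id\phi_R,\Id u)_g\,\Id\mu_g \right| \leq \frac{2}{R}\,\mu_g\bigl(B_g(o,2R)\setminus B_g(o,R)\bigr)\,\sup_{B_g(o,2R)}|\Id u|_g.
\]
I would control the supremum by applying Theorem \ref{est} at each $x\in B_g(o,2R)$ with the fixed choice $R_0=1$, so that $B_g(x,R_0)\subset B_g(o,2R+1)$. Hypothesis (b), together with the pointwise bound $|\ric_g|\leq (m-1)|\sect_g|$, allows the choice
\[
A_1 = A_2 = \O\bigl(R^{\b}\log R\bigr),
\]
so the geometric prefactor in Theorem \ref{est} is $\O(1+\sqrt{A_1}+\sqrt{A_2}) = \O\bigl(R^{\b/2}(\log R)^{1/2}\bigr)$. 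Hypotheses (i) and (ii) simultaneously bound $\sup_{B_g(x,1/2)}|u|$ and $\sup_{B_g(x,1/2)}|\Delta_g u|$ by $\O(R^{\max(\ct,\d)}\log R)$, and combining yields
\[
\sup_{B_g(o,2R)}|\Id u|_g = \O\bigl(R^{\b/2+\max(\ct,\d)}(\log R)^{3/2}\bigr).
\]

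Inserting this bound and hypothesis (a) into the displayed inequality above shows that the right-hand side of the integration by parts identity is
\[
\O\bigl(R^{\a+\b/2+\max(\ct,\d)-1}(\log R)^{5/2}\bigr),
\]
and since $\a+\b/2+\max(\ct,\d)\leq \a+\b+\ct+\d<1$ by hypothesis (iii), this tends to $0$, which concludes the argument. The main technical points are the justification of integration by parts against the merely Lipschitz cutoff $\phi_R$ (routine, by approximation) and — more substantially — the uniform application of Theorem \ref{est} over annuli whose curvature is allowed to blow up polynomially with $R$. This second point is precisely what the explicit curvature dependence built into Theorem \ref{est} was designed to handle: the geometric prefactor enters only through a half-power of the sectional curvature, which is what makes the budget encoded in (iii) large enough to cover the combined growth of $u$, $\Delta_g u$, curvature and volume.
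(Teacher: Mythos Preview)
Your proof is correct and follows essentially the same route as the paper: bound $|\Id u|_g$ on large balls via Theorem \ref{est}, then control the annular integral of the gradient against the volume growth. The paper packages the second step as an appeal to Karp's global divergence theorem while you unpack that argument with an explicit cutoff, and your bookkeeping yields a slightly sharper gradient exponent ($\beta/2+\max(\gamma,\delta)$ in place of the paper's $\beta+\gamma+\delta$), though hypothesis (iii) renders this surplus unnecessary.
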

\begin{Remark} 1. Clearly, Corollary \ref{anw} can take the form of a non-existence result for global solutions of the Poisson equation $\Delta_{g} u = f$ on $M$. Indeed, under the growth condition (ii) on $f$ and  the growth assumptions (a), (b) on the geometry of $M$, if $\int_{M} f \Id\mu_{g} \not=0$ then there exist no smooth solution $u$ of $\Delta_{g} u = f$ satisfying $\|u\|_{\mathsf{L}^{\infty}(B_{g}(o,R))} = O(R^{\beta }\log R)$, for any $\beta$ as in (iii).\\
2. Condition (a) from Corollary \ref{anw} implies that $(M,g)$ has a sublinear volume growth at $\infty$, in the sense that for any $0< \e \ll 1$ such that $\e+ \a<1$ one has $\mu_g(B_g(o,R)) = \O(R^{\a + \e})$. To see this, observe that 
$$
\mu_g(B_g(o,2R)\setminus B_g(o, R)) = \O(R^{\a+\e}).
$$
Let $n \in \mathbb{N}$ be the largest integer such that $2^{n} < R$ so that $2^{n+1} \geq R$. Then
 \[
 \mu_g(B_g(o,R)) \leq \mu_g(B_g(o,2^{n+1})) = \O\Big(\sum_{j=0}^{n+1} (2^{\a+\e})^{j}\Big) = \O(2^{n(\a+\e)}) = \O(R^{\a + \e}).
 \]
 Conversely, condition (a) is satisfied, if there exists $\a$ such that for all large $R$ one has 
 $$
 \mathrm{const}_1\cdot R^{\a} \leq \mu_g(B_g(o,R)) \leq  \mathrm{const}_2 \cdot R^{\a}
 $$
\end{Remark}

\begin{proof}[Proof of Corollary \ref{anw}.] This result follows immediately from Theorem \ref{est} and the global divergence theorem by L. Karp \cite{karp}, which states that if $X$ is a locally integrable vector field on a smooth geodesically complete Riemannian manifold $(M',g')$ such that for some $o'\in M'$ one has 
$$
 \int_{B_{g'}(o,2R) \setminus B_{g'}(o,R)} | X |_{g'} \Id\mu_{g'}= \mathrm{o}(R),
$$
then the existence of $\int_{M'} \mathrm{div}_{g'}(X)\Id \mu_{g'} $ implies $\int_{M'} \mathrm{div}_{g'}(X)\Id \mu_{g'}=0$. Indeed, Theorem \ref{est} implies
 \[
 \| |\mathrm{grad}_g (u) \|_{\mathsf{L}^{\infty}(B_g(o,R))} = \| |\Id u|_g \|_{\mathsf{L}^{\infty}(B_g(o,R))} = \O (R^{\b+\ct+\d} \log^{\b+\ct+\d}(R) )
 \]
 In particular, using the volume assumption,
 \[
\frac{ \int_{B_g(o,2R) \setminus B_g(o,R)} |\mathrm{grad}_g u |_g \Id\mu_{g}}{R} = \O(R^{\a + \b+\ct+\d-1} \log^{\a+\b+\ct+\d}(R) ) =\mathrm{o}(1).
 \]
 Therefore, we can apply the global divergence theorem with the vector field $X=\mathrm{grad}_g (u)$ and conclude that
 \[
 \int_{M} \Delta_g u \, \Id\mu_{g}=\int_{M} \mathrm{div}_g\circ \mathrm{grad}_g (u) \, \Id\mu_{g} =0,
 \]
completing the proof.
\end{proof}

\section{Proof of the main result}\label{beweis}

\subsection{A collection of preliminary results}
The proof of Theorem \ref{est} will be prepared with several auxiliary results. 

\begin{Notation} Given an open subset $\Omega\subset \IR^m$, a function $f:\Omega\to\IR$, and $0<\alpha\leq 1$, we denote with
$$
[f]_{0,\alpha;\Omega}:=\sup_{x,y\in \Omega, x\ne y}\f{|f(x)-f(y)|}{|x-y|^{\alpha}}\in [0,\infty]
$$
the $\alpha$-Hölder constant of $f$.
\end{Notation}

Note that for every $0<\alpha\leq \beta\leq 1$ one has the trivial estimate
$$
[f]_{0,\alpha;\Omega}\leq \mathrm{diam}(\Omega)^{\beta-\alpha} [f]_{0,\beta;\Omega}.
$$

We record the following interior elliptic estimate from \cite{HPW}, which  follows from bootstrapping classical interior estimates:

\begin{Lemma}\label{him2} Let $P$ be a second order smooth elliptic differential operator of the form $P=\sum_{ij}a^{ij}\partial_i \partial_j + b$ which is defined on a Euclidean ball $B^{\mathrm{eucl}}_{2}(0)\subset \IR^m$. Assume that 
\begin{itemize}
\item $(a^{ij})\geq 1/4$ as a bilinear form
\item for all $i,j$, the function $a^{ij}$ is Lipschitz continuous
\item $b$ is bounded.
\end{itemize}
Pick $\Lambda>0$, $0<\alpha\leq 1$ such that
$$
\max_{ij}[a^{ij}]_{0,\alpha;B^{\mathrm{eucl}}_{1}(0)}\leq  \Lambda,\>\>\max_{ij}\left\|a^{ij}\right\|_{\mathsf{L}^{\infty}(B^{\mathrm{eucl}}_{2}(0))}\leq \Lambda, \>\>\left\|b\right\|_{\mathsf{L}^{\infty}(B^{\mathrm{eucl}}_{2}(0))}\leq \Lambda.
$$
Then there is a constant $C=C(m,\Lambda,\alpha)>0$ which only depends on the indicated parameters, such that, for any $u\in\mathsf{W}^{2,q}(B^{\mathrm{eucl}}_{2}(0))$ one has 
$$
\left\|u\right\|_{\mathsf{W}^{2,q}(B^{\mathrm{eucl}}_{1}(0))}\leq C  \left\|Pu\right\|_{\mathsf{L}^{q}(B^{\mathrm{eucl}}_{2}(0))}+C\left\|u\right\|_{\mathsf{L}^{2}(B^{\mathrm{eucl}}_{2}(0))}.
$$
\end{Lemma}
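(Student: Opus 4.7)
The plan is to invoke the classical interior $\mathsf{L}^q$-theory for uniformly elliptic second-order operators in non-divergence form and then upgrade the $\mathsf{L}^q$-norm of $u$ that naturally appears on the right-hand side to the claimed $\mathsf{L}^2$-norm by a standard bootstrap on nested balls. First I would apply the Calder\'on-Zygmund / Agmon-Douglis-Nirenberg interior estimate (e.g.\ Theorem 9.11 in Gilbarg-Trudinger), whose hypotheses are satisfied: $P$ is uniformly elliptic because $(a^{ij}) \geq 1/4$, the leading coefficients are continuous (indeed $\alpha$-H\"older with constant $\leq \Lambda$), and $b$ is bounded by $\Lambda$. This produces, for any $1 \leq r < R \leq 2$, the estimate
$$
\left\|u\right\|_{\mathsf{W}^{2,q}(B^{\mathrm{eucl}}_{r}(0))} \leq C_{1} \left\|Pu\right\|_{\mathsf{L}^{q}(B^{\mathrm{eucl}}_{R}(0))} + C_{1}\left\|u\right\|_{\mathsf{L}^{q}(B^{\mathrm{eucl}}_{R}(0))}
$$
with $C_{1} = C_{1}(m,\Lambda,\alpha,q,r,R)$, which is the target inequality except that $\mathsf{L}^{q}$ appears on the right in place of $\mathsf{L}^{2}$.

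Next I would replace $\|u\|_{\mathsf{L}^{q}}$ by $\|u\|_{\mathsf{L}^{2}}$ on the right-hand side. If $q \leq 2$, H\"older's inequality together with the bounded volume of $B^{\mathrm{eucl}}_{2}(0)$ gives this immediately. If $q > 2$, I would run a finite cascade on nested balls $B^{\mathrm{eucl}}_{r_{k}}(0)$ with $1 = r_{0} < r_{1} < \cdots < r_{N} \leq 2$: at step $k$ a Sobolev embedding of the form $\mathsf{W}^{2,q_{k+1}} \hookrightarrow \mathsf{L}^{q_{k}}$ controls $\|u\|_{\mathsf{L}^{q_{k}}(B_{r_{k}})}$ by $\|u\|_{\mathsf{W}^{2,q_{k+1}}(B_{r_{k+1}})}$, which is in turn controlled by applying the first step on the slightly larger ball. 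Choosing the sequence $q = q_{0} > q_{1} > \cdots > q_{N} = 2$ so that $1/q_{k+1} = 1/q_{k} + 2/m$ (clipped at $2$) terminates in finitely many steps depending only on $m$ and $q$. The $\mathsf{L}^{q_{k}}$-norms of $Pu$ appearing at each stage are reabsorbed into $\|Pu\|_{\mathsf{L}^{q}(B^{\mathrm{eucl}}_{2}(0))}$ by H\"older, since $q_{k} \leq q$ and the balls have bounded volume. A Giaquinta-type absorption lemma can alternatively be used with an interpolation $\|u\|_{\mathsf{W}^{1,p}} \leq \varepsilon\|u\|_{\mathsf{W}^{2,q}} + C(\varepsilon)\|u\|_{\mathsf{L}^{2}}$ to handle the overlap; both routes give the same conclusion.

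The main obstacle is bookkeeping: one must keep the geometric parameters explicit and verify that the iteration can indeed be carried out while keeping all intermediate balls inside $B^{\mathrm{eucl}}_{2}(0)$ with shrinkage controlled only by $m$, $q$, $\alpha$, $\Lambda$. Once this is done, the constant emerging is a product of finitely many constants of the type $C_{1}$ and Sobolev constants, and therefore depends only on $m$, $\Lambda$, $\alpha$ (and $q$, which in the intended application is fixed large enough so that $\mathsf{W}^{2,q} \hookrightarrow \mathsf{C}^{1}$), as claimed.
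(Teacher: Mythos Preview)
Your proposal is correct and matches the paper's own treatment: the paper does not give a proof but simply records the lemma as borrowed from \cite{HPW}, noting that it ``follows from bootstrapping classical interior estimates,'' which is precisely the two-step scheme you outline (Calder\'on--Zygmund interior $\mathsf{L}^q$-estimate as in Gilbarg--Trudinger, then a finite Sobolev cascade on nested balls to trade $\|u\|_{\mathsf{L}^q}$ for $\|u\|_{\mathsf{L}^2}$). Your parenthetical remark that the constant also depends on $q$ is well taken; in the paper's application $q=m+1$ is fixed once $m$ is, so this dependence is absorbed into $C(m,\Lambda,\alpha)$.
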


Next we record the following Morrey-type inequality that can be  deduced e.g. from \cite[Theorem 7.19]{GT}:

\begin{Lemma}\label{lemma-Morrey}
Let $u : \Omega \to \rr$ be a $\mathsf{W}^{1,p}$-function on the bounded domain $\Omega \subset \rr^{m}$. Assume that
\[
\max_{k} \| \partial_{k}u \|_{\IL^{p}(\Omega)} \leq K,
\]
for some constants $K>0$ and $p>m$. Then, $u \in \mathsf{C}^{0,\alpha}(\Omega)$ with $\alpha = (p-m)/p$ and, for all open balls $B^{\mathrm{eucl}}_{2R} \subset \Omega$, we have the estimate
\begin{equation}\label{Mor1}
[u]_{0,\alpha;B^{\mathrm{eucl}}_{R}} \leq CK,
\end{equation}
where  $C=C(m,\alpha)>0$ is a universal constant.
\end{Lemma}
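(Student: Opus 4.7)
The statement is the classical Morrey embedding $\mathsf{W}^{1,p}\hookrightarrow \mathsf{C}^{0,\alpha}$ for $p>m$ and $\alpha = (p-m)/p$, in its scale-invariant form: the point is to ensure that the constant $C$ depends only on $m$ and $\alpha$. My plan is to reproduce the standard two-step averaging proof of Gilbarg--Trudinger, \S 7.8, and to do the bookkeeping necessary for scale-invariance. By mollification and passage to the precise Lebesgue representative it suffices to treat $u\in\ICC(\Omega)$.

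\emph{Step 1 (Campanato-type pointwise bound).} For every Euclidean ball $B = B^{\mathrm{eucl}}(z,r)\subset\Omega$ and every $x\in B$, I would write
\[
u(x) - u(w) \;=\; -\int_0^{|x-w|} \partial_{\rho} u\bigl(x+\rho \omega\bigr)\, d\rho,\qquad \omega = \tfrac{w-x}{|x-w|},
\]
then average over $w\in B$ and pass to polar coordinates centred at $x$ to obtain
\[
\bigl|u(x) - u_B\bigr| \;\leq\; C(m) \int_{B} \frac{|\nabla u(w)|}{|x-w|^{m-1}}\, dw.
\]
Since $p>m$, the conjugate exponent $p' = p/(p-1)$ satisfies $(m-1)p'<m$, so H\"older's inequality together with the elementary bound $\int_B |x-w|^{-(m-1)p'}\,dw \leq C(m,p)\, r^{p'\alpha}$ yields the scale-invariant Campanato estimate
\[
\bigl|u(x) - u_B\bigr| \;\leq\; C(m,p)\, r^{\alpha}\, \|\nabla u\|_{\IL^p(B)} \qquad \text{for every } x\in B.
\]

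\emph{Step 2 (H\"older seminorm on $B^{\mathrm{eucl}}_R$).} Fix $x,y\in B^{\mathrm{eucl}}_R$. In the regime $|x-y|\leq R$, I set $\rho = |x-y|$ and introduce the balls $B_x = B^{\mathrm{eucl}}(x,\rho)$ and $B_y = B^{\mathrm{eucl}}(y,2\rho)$, both contained in $B^{\mathrm{eucl}}_{2R}$ and satisfying $B_x\subset B_y$ with $|B_y|/|B_x|$ a dimensional constant. Applying Step~1 to $|u(x)-u_{B_x}|$ and to $|u(y)-u_{B_y}|$, and averaging the pointwise Step~1 bound $|u(w)-u_{B_y}|\leq C(2\rho)^\alpha\|\nabla u\|_{\IL^p(B_y)}$ over $w\in B_x$ to control $|u_{B_x}-u_{B_y}|$, the triangle inequality gives
\[
|u(x)-u(y)| \;\leq\; C(m,p)\,|x-y|^\alpha\,\|\nabla u\|_{\IL^p(B^{\mathrm{eucl}}_{2R})}.
\]
In the complementary regime $|x-y|>R$, the identical two-ball argument with radii $R$ and $2R$ gives $|u(x)-u(y)|\leq CR^\alpha\|\nabla u\|_{\IL^p(B^{\mathrm{eucl}}_{2R})}\leq C|x-y|^\alpha\|\nabla u\|_{\IL^p(B^{\mathrm{eucl}}_{2R})}$. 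Finally, $\|\nabla u\|_{\IL^p(B^{\mathrm{eucl}}_{2R})}\leq \sqrt{m}\,\max_k\|\partial_k u\|_{\IL^p(\Omega)}\leq \sqrt{m}\,K$, and the $\sqrt{m}$ is absorbed into $C(m,\alpha)$.

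No genuine obstacle arises: the argument is classical and the whole task is bookkeeping to keep the constant depending only on $m$ and $\alpha$ (and in particular independent of $R$, $\Omega$, and $K$). The one mildly delicate point is the chained-average step when $|x-y|$ is comparable to $R$, which is exactly why the statement requires a factor $2$ between the ball $B^{\mathrm{eucl}}_R$ on which the seminorm is estimated and the ball $B^{\mathrm{eucl}}_{2R}$ that must lie in $\Omega$.
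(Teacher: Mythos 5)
The paper does not actually prove this lemma: it simply cites Gilbarg--Trudinger, Theorem 7.19 (Morrey's lemma, deducing H\"older continuity from the bound $\int_{B(z,r)}|Du|\leq Kr^{m-1+\alpha}$), and the stated form follows from that by a single application of H\"older's inequality. Your proposal instead reconstructs the standard self-contained proof that lies behind that citation, and Step~1 together with the scaling bookkeeping is correct (one checks $m/p'-(m-1)=\alpha$).

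Step~2, however, contains a containment error. If $z_{0}$ is the common centre of $B^{\mathrm{eucl}}_{R}$ and $B^{\mathrm{eucl}}_{2R}$ and $\rho=|x-y|\leq R$, then $B^{\mathrm{eucl}}(y,2\rho)\subset B^{\mathrm{eucl}}(z_{0},|y-z_{0}|+2\rho)$ and $|y-z_{0}|+2\rho$ can be as large as $3R$, so the claim that $B_{y}\subset B^{\mathrm{eucl}}_{2R}$ fails whenever $\rho>R/2$. The same problem is worse in the complementary regime, where $B^{\mathrm{eucl}}(y,2R)\subset B^{\mathrm{eucl}}(z_{0},3R)\not\subset B^{\mathrm{eucl}}_{2R}$. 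The standard repair is to drop the two-ball chain and centre a single ball at the midpoint $z=(x+y)/2$, which satisfies $|z-z_{0}|<R$. For $\rho\leq R$ take $B'=B^{\mathrm{eucl}}(z,\rho/2+\epsilon)$ with $0<\epsilon<R/2$; then $x,y\in B'$ and $B'\subset B^{\mathrm{eucl}}(z_{0},R+\rho/2+\epsilon)\subset B^{\mathrm{eucl}}_{2R}$, and Step~1 applied once to each of $x$ and $y$ gives
\[
|u(x)-u(y)|\leq|u(x)-u_{B'}|+|u(y)-u_{B'}|\leq C(m,p)\,\rho^{\alpha}\,\|\nabla u\|_{\IL^{p}(B^{\mathrm{eucl}}_{2R})}.
\]
For $R<\rho<2R$ take $B'=B^{\mathrm{eucl}}_{2R}$ itself and use $(2R)^{\alpha}\leq(2\rho)^{\alpha}$. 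With that repair the argument is complete, depends only on $m$ and $\alpha$ as required, and is mathematically equivalent to the paper's cited source.
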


\begin{Notation} In the sequel, given $(M,g)$ a smooth Riemannian manifold of dimension $m$, for every $p\in (m,\infty)$, $Q>1$, the symbol $r_{p,Q,g}(x)$ will stand for the $\mathsf{W}^{1,p}$-harmonic radius at $x$ with accuracy $Q$. In other words, $r_{p,Q,g}(x)$ is defined to be supremum of all $r>0$ such that $B_g(x,r)$ admits a centered $\Delta_g$-harmonic chart 
$$
\phi=(x_1\dots, x_m):B_g(x,r)\longrightarrow U\subset \IR^m
$$
which satisfies
$$
Q^{-1}(\delta^{ij})\leq (g^{ij})\leq Q(\delta^{ij}) \ ,\>\>\>r^{1-m/p}\max_{i,j,k}\left\|\partial_k g^{ij}\right\|_{\IL^p(U)}\leq Q-1
$$
as bilinear forms, where by \emph{centered} we mean $\phi(x)=0$.

\end{Notation}

\begin{Notation}
Let $a\vee b:=\max(a,b)$, $a\wedge b:=\min(a,b)$ for real numbers $a,b$. 
\end{Notation}

We shall also borrow the following precise $\mathsf{W}^{1,p}$-harmonic radius estimates from \cite{cheeger-a} (see also \cite{HPW}), which at least in a qualitative form can be traced back to \cite{jost}:

\begin{Lemma}\label{cheeger} For every natural $m\geq 2$, $p\in (m,\infty)$, $Q>1$, there exists a constant $A=A(m,p,Q)>0$, which only depends on the indicated parameters, such that for all $C>0$, all smooth Riemannian $m$-manifolds $(M,g)$ with $\mathrm{Ric}_g\geq -1/C^2$, and all $x\in M$ one has
$$
1\wedge r_{p,Q,g}(x)\geq A \cdot (r_{\mathrm{inj},g}(x) \wedge 1 \wedge C).
$$
\end{Lemma}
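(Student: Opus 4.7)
The plan is to argue by contradiction using scale invariance combined with the Anderson $\mathsf{W}^{2,p}$-precompactness theorem for Riemannian manifolds with bounded Ricci curvature and injectivity radius bounded from below. Suppose no universal constant $A(m,p,Q)$ exists. Then for fixed $m$, $p\in(m,\infty)$, $Q>1$ I can extract sequences $(M_k,g_k,x_k)$ and $C_k>0$ with $\ric_{g_k}\geq -1/C_k^2$ such that
\[
\varepsilon_k := \frac{1\wedge r_{p,Q,g_k}(x_k)}{r_{\mathrm{inj},g_k}(x_k)\wedge 1\wedge C_k}\longrightarrow 0.
\]
Rescale by $\lambda_k:=(r_{\mathrm{inj},g_k}(x_k)\wedge 1\wedge C_k)^{-1}$ and set $\tilde g_k:=\lambda_k^2 g_k$.

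The key observation is that all three quantities appearing in the statement scale well under $g\mapsto \lambda^2 g$: the injectivity radius and Ricci lower bound scale so that $r_{\mathrm{inj},\tilde g_k}(x_k)\geq 1$ and $\ric_{\tilde g_k}\geq -(C_k\lambda_k)^{-2}\geq -1$, while the $\mathsf{W}^{1,p}$-harmonic radius is scale-invariant. Indeed, the pinching condition $Q^{-1}(\delta^{ij})\leq (g^{ij})\leq Q(\delta^{ij})$ is manifestly homogeneous of degree zero, and the exponent $1-m/p$ in $r^{1-m/p}\max_{i,j,k}\|\partial_k g^{ij}\|_{\IL^p(U)}$ is tailored so that this quantity is invariant under the combined rescaling of the metric and the harmonic coordinates. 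Hence $r_{p,Q,\tilde g_k}(x_k)=\lambda_k\, r_{p,Q,g_k}(x_k)\to 0$, while $r_{\mathrm{inj},\tilde g_k}(x_k)\geq 1$ and $\ric_{\tilde g_k}\geq -1$.

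Under exactly these normalized assumptions, Anderson's local regularity and compactness theorem produces, on a uniform geodesic ball around $x_k$, centered $\Delta_{\tilde g_k}$-harmonic coordinates in which $\tilde g_k^{ij}$ is bounded in $\mathsf{W}^{2,p}$ independently of $k$. Passing to a subsequence, $(M_k,\tilde g_k,x_k)$ converges in the pointed $\mathsf{W}^{2,p}$-topology to a pointed Riemannian manifold $(M_\infty,g_\infty,x_\infty)$, whose metric is $\mathsf{W}^{2,p}$ and, via Sobolev embedding $p>m$, of class $\mathsf{C}^{1,\alpha}$ with $\alpha=1-m/p$. Choosing harmonic coordinates centered at $x_\infty$, continuity of $g_\infty^{ij}$ with $g_\infty^{ij}(x_\infty)=\delta^{ij}$ gives the pinching for sufficiently small $r$, and absolute continuity of the $\mathsf{L}^p$-integral of $|\partial g_\infty^{ij}|$ forces the scale-invariant quantity $r^{1-m/p}\|\partial_k g_\infty^{ij}\|_{\IL^p(B_r)}$ to vanish as $r\to 0$. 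Therefore $r_{p,Q,g_\infty}(x_\infty)\geq r_0$ for some $r_0>0$. Pulling back these harmonic coordinates through the $\mathsf{W}^{2,p}$-convergence diffeomorphisms produces nearly-harmonic coordinates for $\tilde g_k$ on balls of definite size, which can be corrected to genuine harmonic coordinates by solving a small Dirichlet problem, yielding $\liminf_k r_{p,Q,\tilde g_k}(x_k)\geq r_0/2>0$. This contradicts $r_{p,Q,\tilde g_k}(x_k)\to 0$ and closes the argument.

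The main obstacle is not the scaling bookkeeping but the input from Anderson-Cheeger: producing, uniformly over the sequence, harmonic coordinates of a definite Euclidean size in which $\tilde g_k^{ij}$ is controlled in $\mathsf{W}^{2,p}$ purely from $\ric_{\tilde g_k}\geq -1$ and $r_{\mathrm{inj},\tilde g_k}(x_k)\geq 1$. This requires solving the harmonic coordinate Dirichlet problem, controlling the resulting chart via a Moser iteration/De~Giorgi lower bound on the Jacobian (to keep the metric pinched), and then bootstrapping the elliptic equation $\Delta_{\tilde g_k} x^i=0$, which in these very coordinates reads $\sum_{ij}\tilde g_k^{ij}\partial_i\partial_j x^i=0$ combined with the Bianchi identity $\ric_{ij}=-\frac{1}{2}\tilde g^{k\ell}\partial_k\partial_\ell \tilde g_{ij}+\mathrm{(lower\ order)}$, to upgrade $\mathsf{L}^\infty$-bounds on $\tilde g_k^{ij}$ to $\mathsf{W}^{2,p}$-bounds via Lemma~\ref{him2}. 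The semicontinuity step used at the end, namely that an approximate harmonic chart can be perturbed into an exact one without losing the quantitative $Q$-pinching, is the only other delicate point and is handled by inverse function theorem in the appropriate Sobolev class.
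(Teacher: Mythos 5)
The paper does not prove Lemma~\ref{cheeger}: it is explicitly borrowed from Anderson--Cheeger \cite{cheeger-a}, so any proof you supply should be measured against their argument. Your general framework --- blow-up by contradiction, compactness of the rescaled sequence, and lower semicontinuity of the harmonic radius --- is indeed the right skeleton and is what underlies both Anderson's and Anderson--Cheeger's proofs. But there is a decisive gap in the middle.

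You invoke the \emph{Anderson} $\mathsf{W}^{2,p}$-precompactness theorem, which requires a \emph{two-sided} Ricci bound $|\mathrm{Ric}|\le\Lambda$ together with an injectivity-radius lower bound, and then propose to reproduce its harmonic-coordinate $\mathsf{W}^{2,p}$-estimates by bootstrapping through the identity $\mathrm{Ric}_{ij} = -\tfrac12\, g^{k\ell}\partial_k\partial_\ell g_{ij} + (\text{lower order})$. This cannot work under the hypothesis of the lemma, which is only $\mathrm{Ric}_g \ge -1/C^2$. To use that identity to control $\partial^2 g$ in $\IL^p$ one needs $\|\mathrm{Ric}\|_{\IL^p}$ (or $\|\mathrm{Ric}\|_{\IL^\infty}$) bounded, and a one-sided inequality gives no such control; after your rescaling, $\ric_{\tilde g_k}$ is bounded below by $-1$ but is otherwise unconstrained, so the right-hand side is not in any Lebesgue class. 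Anderson--Cheeger's actual argument, which handles precisely this one-sided situation, abandons the Ricci--Laplacian identity and obtains only $\mathsf{W}^{1,p}$/$\mathsf{C}^\alpha$ control of the metric, via a considerably more delicate analysis (Jacobi-field and Hessian-of-distance comparison estimates, $\IL^p$ bounds on the second fundamental forms of geodesic spheres, Bishop--Gromov volume monotonicity); this weaker regularity is exactly why the lemma is stated for the $\mathsf{W}^{1,p}$-harmonic radius rather than $\mathsf{W}^{2,p}$.

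There is also a structural problem. Once you normalize to $\ric_{\tilde g_k}\ge -1$ and $r_{\mathrm{inj},\tilde g_k}(x_k)\ge 1$, the statement that one can produce harmonic coordinates of a definite size around $x_k$ with uniform $\mathsf{W}^{1,p}$ bounds is, after rescaling, precisely the content of Lemma~\ref{cheeger}. If you take it as a black box, the argument is circular; if you intend to derive it from scratch, then the contradiction/compactness scaffolding is superfluous and the real work (the Anderson--Cheeger estimates) has not been done. Moreover, in the genuine blow-up proof one normalizes by the (approximately minimal) \emph{harmonic radius} at a nearby point $y_k$, not by the injectivity radius: this is what guarantees harmonic charts of uniform Euclidean size on the rescaled manifolds and also sends the injectivity radius to infinity, so that the limit can be identified (after a separate rigidity argument) as flat $\rr^m$, which has infinite harmonic radius --- that is the eventual contradiction. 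Your normalization by $(r_{\mathrm{inj},g_k}(x_k)\wedge 1\wedge C_k)^{-1}$ leaves $r_{p,Q,\tilde g_k}(x_k)\to 0$, so no uniform charts are available at $x_k$ and the compactness step cannot even start without assuming the conclusion.
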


Finally, we point out the following well known localised version of the Cartan-Hadamard theorem:

\begin{Lemma}\label{localisometry} Let $(M,g)$ be a smooth complete Riemannian manifold of dimension $m\geq 2$. Assume that $x\in M$, $R_0$, $K>0$, are chosen such that 
$$
\mathrm{Sec}_g\leq K\quad\text{ on $B_g(x,R_0)$}.
$$
Then, for every $0<R < (\pi/\sqrt{K})\wedge R_0$,  there exist a smooth complete Riemannian manifold $(\bar M , \bar g)$, a point $\bar x \in \bar M$, and a smooth surjective local isometry 
$$
F=F_{g,x,R} : B_{\bar g} (\bar x ,R) \longrightarrow B_{g}(x,R)
$$
such that:\vspace{2mm}

\noindent $\mathrm{(a)}$ $F(\bar x) = x$ and $\mathrm{(b)}$ $r_{\mathrm{inj}, \bar g}(\bar x) \geq R$. \vspace{2mm}

\noindent In particular, by the local isometric property of $F$, it holds:\vspace{2mm}

\noindent $\mathrm{(c)}$ $\sect_{\bar g} \leq K$ on $B_{\bar g}(\bar x, R)$ and $\mathrm{(d)}$ $F(B_{\bar g} (\bar x ,r)) = B_{g}(x,r)$, for every $0< r \leq R$.
\end{Lemma}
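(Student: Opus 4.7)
The strategy is to realize $(\bar M, \bar g)$ as the tangent space $T_x M$ endowed with an extension of the pulled-back metric $\exp_x^* g$, and to take $F = \exp_x$—a local version of the Cartan-Hadamard construction. Setting $R_* := (\pi/\sqrt K) \wedge R_0$, one has $R < R_*$ by hypothesis. Completeness of $(M, g)$ makes $\exp_x : T_x M \to M$ globally defined. For $v \in T_x M$ with $|v| < R_*$, the radial geodesic $t \mapsto \exp_x(tv)$, $t \in [0,1]$, stays inside $B_g(x, R_*) \subset B_g(x, R_0)$ where $\sect_g \leq K$, and since its length $|v|$ lies below the Rauch conjugate radius $\pi/\sqrt K$, $(d\exp_x)_v$ is nonsingular. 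Hence $\exp_x$ is a local diffeomorphism on $B^{T_x M}(0, R_*)$, and $\hat g := \exp_x^* g$ is a well-defined smooth Riemannian metric on this ball.

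Next I would take $\bar M := T_x M \cong \rr^m$ as a smooth manifold, $\bar x := 0$, fix $R' \in (R, R_*)$, and use a partition of unity to extend $\hat g|_{B^{T_x M}(0, R')}$ to a smooth complete Riemannian metric $\bar g$ on $\bar M$, chosen so that $\bar g = \hat g$ on $\overline{B^{T_x M}(0, R)}$. The key identification $B_{\bar g}(\bar x, R) = B^{T_x M}(0, R)$ would then be verified as follows: the inclusion $\supset$ is immediate since the Euclidean line $t \mapsto tv$ (with $|v| < R$) is a $\bar g$-curve of length $|v|$; for the inclusion $\subset$, any $\bar g$-curve from $\bar x$ reaching a point outside $B^{T_x M}(0, R)$ must first cross the sphere $\{|v| = R\}$ at some point $p$, and the portion up to $p$—which lies in $\overline{B^{T_x M}(0, R)}$ where $\bar g = \hat g$—has $\hat g$-length at least $d_{\hat g}(0, p) = R$, because radial $\hat g$-geodesics in $B^{T_x M}(0, \pi/\sqrt K)$ are minimizing by Gauss's lemma combined with Rauch's theorem. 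Consequently $F := \exp_x|_{B_{\bar g}(\bar x, R)}$ is a well-defined local isometry into $(M, g)$ with image contained in $B_g(x, R)$.

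The four properties would then follow routinely from the local-isometry structure of $F$: (a) is immediate; (c) holds because local isometries preserve sectional curvature and $F$ maps into $B_g(x, R) \subset B_g(x, R_0)$ where $\sect_g \leq K$; (d) uses the distance-non-increasing property for the inclusion $F(B_{\bar g}(\bar x, r)) \subset B_g(x, r)$, and Hopf-Rinow for the reverse inclusion, lifting any minimizing $g$-geodesic from $x$ to $y \in B_g(x, r)$ back to its initial velocity $v \in B_{\bar g}(\bar x, r) = B^{T_x M}(0, r)$. Finally, for (b), under the canonical identification $T_{\bar x}\bar M \cong T_x M$ (which is the one induced by $(dF)_{\bar x} = (d\exp_x)_0 = \mathrm{id}$), each $\bar g$-geodesic from $\bar x$ with initial velocity $v$ is, so long as it stays in the region $\bar g = \hat g$, simply the Euclidean line $t \mapsto tv$; so $\exp^{\bar g}_{\bar x}$ coincides with the identity on $B^{T_x M}(0, R)$, giving $r_{\mathrm{inj}, \bar g}(\bar x) \geq R$.

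The main obstacle I anticipate is precisely the identification $B_{\bar g}(\bar x, R) = B^{T_x M}(0, R)$: it requires that no $\bar g$-shortcut from $\bar x$ to a point inside the Euclidean ball can be gained by briefly leaving that ball, and this hinges on the sharp Gauss-Rauch fact that radial $\hat g$-geodesics from the origin minimize $\hat g$-distance within the conjugate radius $\pi/\sqrt K$. Once this equality is in place, the rest of the verification is essentially bookkeeping about how local isometries transport geodesics, curvatures and distances.
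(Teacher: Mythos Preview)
Your proposal is correct and takes essentially the same approach as the paper: pull back $g$ to $T_xM$ via $\exp_x$, extend to a complete metric $\bar g$ by a partition of unity, set $\bar x=0$ and $F=\exp_x$, and verify (a)--(d) using Rauch comparison together with the fact that radial segments $t\mapsto tv$ are $\bar g$-geodesics. The only cosmetic difference is the order of the argument: the paper first shows the radial $\bar g$-geodesics are minimizing (no intersections away from $0$, no conjugate points) to obtain $r_{\mathrm{inj},\bar g}(\bar x)\geq R$ and then deduces the ball identity, whereas you establish the ball identity first via a Gauss--Rauch crossing argument and then read off the injectivity radius from $\exp^{\bar g}_{\bar x}=\mathrm{id}$ on the Euclidean $R$-ball.
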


For the sake of completeness, a proof of Lemma \ref{localisometry} will be given in Appendix \ref{appendix-geometric}. Obviously, the original version by Cartan-Hadamard for $K=0$ corresponds to the choices $R  = +\infty$, $F = \exp_{x}$ and $(\bar M , \bar g ) = (T_{x}M , \exp_{x}^{\ast} g)$, which is a smooth complete Riemannian manifold of non-positive curvature with $r_{\mathrm{inj},\bar g}(0)= +\infty$. The local version is obtained by cutting this construction. A version of Lemma \ref{localisometry} valid for a possibly incomplete $(M,g)$ could be obtained by assuming that the exponential map $\exp_{x}$ is well-defined on a sufficiently large ball around $x$, however the statement would look quite artificial. Finally, we remark that the result is local in the sense that the infinity of $(\bar M ,\bar g)$ plays no role and can be prescribed (almost) arbitrarily. For instance, we can (and do)  assume that $\bar M$ is diffeomorphic to $\rr^{m}$ and isometric to the standard Euclidean space outside $B_{\bar g}(\bar x, R+\epsilon)$, $0 < \epsilon \ll 1$.

\subsection{The proof of Theorem \ref{est}}
After these preparations we can finally give the proof of the main theorem of the paper: 

\begin{proof}[Proof of Theorem \ref{est}] We set $f:=\Delta _g \psi$. From now on we fix an arbitrary $x\in M$, and numbers $R_{0}$, $A_1$, $A_2$ as in the assumptions. 
Set
$$
0<R:=R_{0} \wedge (\pi/\sqrt{A_{1}})/2 .
$$

According to Lemma \ref{localisometry} there exists a pointed smooth complete Riemann manifold $(\bar M ,\bar g,\bar x)$, and a smooth surjective local isometry
$$
F=F_{g,x,R}:B_{\bar g}(\bar x,R) \longrightarrow B_{g}(x,R)
$$
such that
\begin{align}\label{mes}
r_{\mathrm{inj},\bar g} (\bar x) \geq R,\>\>
 \mathrm{Sec}_{\bar g}  \leq A_1,\>\>\mathrm{Ric}_{\bar g} \geq - A_2\text{ on }B_{\bar g}(\bar x,R).
\end{align}
We define $\bar \psi:= \psi\circ F  \in \ICC(B_{\bar g}(\bar x,R))$,  $\bar f:= f\circ F  \in \ICC(B_{\bar g}(\bar x,R))$, which, as $F$ is a smooth local isometry, satisfy
\[
\Delta_{\bar g} \bar \psi= \bar f\>\>\text{ in $ B_{\bar g}(\bar x,R)$}.
\]
We shall now estimate the derivative of $\bar \psi$ instead of $\psi$ and then \lq\lq{}pushforward\rq\rq{} the estimate carefully to $\psi$. To this end, set
$$
0<\bar r:=(r_{m+1,4,\bar g}(\bar x)\wedge R \wedge 1)/2<\infty
$$
and pick a centered $\Delta_{\bar g}$-harmonic chart 
$$
\phi=(y_1\dots, y_m):B_{\bar g}(\bar x, \bar r)\longrightarrow U\Subset \IR^m
$$
which satisfies
\begin{align}\label{gap}
(1/4)(\delta^{ij})\leq (\bar g^{ij})\leq 4(\delta^{ij}) \ ,\>\>\>\bar r^{1-m/(m+1)}\max_{i,j,k}\left\|\partial_k \bar g^{ij}\right\|_{\IL^{m+1}(U)}\leq 1.
\end{align}
The latter $\IL^{m+1}$-bound and Lemma \ref{lemma-Morrey} applied with $K = \bar r^{-1+m/(m+1)}$ and $p=m+1$ imply that there exists a constant $C_1=C_1(m)>0$ such that, for all open Euclidean balls $B^{\mathrm{eucl}}_{2R}\subset U$, one has
$$
\max_{ij}[\bar g^{ij}]_{0,1/(m+1);B^{\mathrm{eucl}}_{R}}\leq C_1 \bar r^{-1+m/(m+1)}.
$$
In addition, as $\phi$ is $\Delta_{\bar g}$-harmonic, it follows that in $B_{\bar g}(0,\bar r)$ one has $\Delta_{\bar g}= \sum_{ij}\bar g^{ij}\partial_i\partial_j$. Next note that 
$$
B^{\mathrm{eucl}}_{ 2\bar r /8}(0) \Subset U.
$$
We want to apply Lemma \ref{him2}. To this end we scale
 $$
\tilde{g}^{ij}(y):=\bar g^{ij}( y\bar r 8^{-1}),\>\>\tilde{f}(y):=\bar f (y\bar r 8^{-1} ),\>\>\tilde{\psi}(y):=\bar \psi  (y\bar r 8^{-1} ),\>\>\text{ $y\in B^{\mathrm{eucl}}_{2}(0)$},
$$
and define $\tilde{P}:=\sum_{ij}\tilde{g}^{ij}\partial_i\partial_j$. Then, noting
\begin{align*}
&\tilde{P}\tilde{\psi}= \bar r^2 (64)^{-1}\tilde{f},\\
&\max_{ij}[\tilde{g}^{ij}]_{0,1-m/(m+1);B^{\mathrm{eucl}}_2(0)}\leq (\bar r 8^{-1})^{1-m/(m+1)}\max_{ij}[\bar g^{ij}]_{0,1-m/(m+1);B^{\mathrm{eucl}}_{2\bar r 8^{-1}}(0)} \leq C_1,\\ 
&\bar r\leq 1,
\end{align*}
Lemma \ref{him2} applied to $P:=\tilde{P}$, $q:=m+1$, $u:=\tilde{\psi}$, $\Lambda:=4\vee C_1$, implies that for some $C_4=C_4(m)>0$, $C_5=C_5(m)>0$ one has
$$
\sup_{B^{\mathrm{eucl}}_{1}(0)}\sqrt{\sum_j\left|\partial_j \tilde{\psi}\right|^2}\leq C_4\left\|\tilde{\psi}\right\|_{\mathsf{W}^{2,m+1}(B^{\mathrm{eucl}}_{2}(0))}\leq C_5  \left\|\tilde{f}\right\|_{\mathsf{L}^{m+1}(B^{\mathrm{eucl}}_{2}(0))}+C_5\left\|\tilde{\psi}\right\|_{\mathsf{L}^{2}(B^{\mathrm{eucl}}_{2}(0))},
$$
where we have used Sobolev\rq{}s embedding theorem for the first inequality. Rescaling everything shows that  
\begin{align*}
&\f{\bar r}{8} \sup_{B^{\mathrm{eucl}}_{\bar r 8^{-1}}(0)}\sqrt{\sum_j\left|\partial_j \bar{\psi}\right|^2}\leq  \sup_{B^{\mathrm{eucl}}_{1}(0)}\sqrt{\sum_j\left|\partial_j \tilde{\psi}\right|^2}\\
&\leq   C_5     8^{m/(m+1)}r^{-m/(m+1)}\left\|\bar f\right\|_{\mathsf{L}^{m+1}(B^{\mathrm{eucl}}_{\bar r 4^{-1}}(0))}+C_58^{m/(m+1)}\bar r ^{-m/2}\left\|\bar \psi\right\|_{\mathsf{L}^{2}(B^{\mathrm{eucl}}_{\bar r 4^{-1}}(0))}\\
&\leq  C_58^{m/(m+1)}\bar r ^{-m/(m+1)} |B^{\mathrm{eucl}}_{\bar r 4^{-1}}(0)|^{\frac{1}{m+1}} \left\| \bar f\right\|_{\mathsf{L}^{\infty}(B^{\mathrm{eucl}}_{\bar r 4^{-1}}(0))}\\
&+C_5 8^{m/(m+1)}\bar r ^{-m/2}|B^{\mathrm{eucl}}_{\bar r 4^{-1}}(0)|^{\frac{1}{2}}\left\| \bar\psi\right\|_{\mathsf{L}^{\infty}(B^{\mathrm{eucl}}_{\bar r 4^{-1}}(0))}\\
&= C_5 2^{m/(m+1)}\left\|\bar f\right\|_{\mathsf{L}^{\infty}(B^{\mathrm{eucl}}_{\bar r 4^{-1}}(0))}+C_5 2^{m/(m+1)}\left\| \bar\psi\right\|_{\mathsf{L}^{\infty}(B^{\mathrm{eucl}}_{\bar r 4^{-1}}(0))}\\
&\leq C_5 2^{m/(m+1)}\left\|\bar f\right\|_{\mathsf{L}^{\infty}(B_{\bar g}(\bar x,\bar r /2))}+C_5 2^{m/(m+1)}\left\|  \bar\psi\right\|_{\mathsf{L}^{\infty}(B_{\bar g}(\bar x,\bar r /2))},
\end{align*}
so that using that $F$ is a smooth local isometry with $F(\bar x)=x$ and the first inequality in (\ref{gap}), for some $C_6=C_6(m)>0$, $C_7=C_7(m)>0$,
\begin{align}\nn
&|\Id  \psi(x)|_g=|\Id  \bar\psi(0)|_{\bar g}\leq \sup_{B^{\mathrm{eucl}}_{\bar r 8^{-1}}(0)}| \Id \bar \psi|_{\bar g}\leq  C_6\sup_{B^{\mathrm{eucl}}_{\bar r 8^{-1}}(0)}\sqrt{\sum_j\left|\partial_j \bar{\psi}\right|^2} \\ \nn
& \leq C_7 \bar r^{-1}\big(\left\|\bar f\right\|_{\mathsf{L}^{\infty}(B_{  \bar g}(\bar x,\bar r /2))}+\left\|\bar \psi\right\|_{\mathsf{L}^{\infty}(B_{ \bar g}(\bar x,\bar r /2))}\big)\\ \nn
&= 2 C_7 (r_{m+1,4,\bar g}(\bar x)\wedge 1 \wedge R)^{-1}\big(\left\|\bar f\right\|_{\mathsf{L}^{\infty}(B_{\bar g}(\bar x,\bar r /2))}+\left\| \bar\psi\right\|_{\mathsf{L}^{\infty}(B_{  \bar g}(\bar x,\bar r /2))}\big)\\ \nn 
&\leq  2 C_7 (r_{m+1,4,\bar g}(\bar x)\wedge 1 \wedge R)^{-1}\big(\left\| f\right\|_{\mathsf{L}^{\infty}(B_{ g}( x,R /2))}+\left\| \psi\right\|_{\mathsf{L}^{\infty}(B_{  g}(x,R /2))}\big),\\ \nn
&\leq  2 C_7 (r_{m+1,4,\bar g}(\bar x)\wedge 1 \wedge R)^{-1}\big(\left\|f\right\|_{\mathsf{L}^{\infty}(B_{ g}(x,R /2))}+\left\| \psi\right\|_{\mathsf{L}^{\infty}(B_{  g}(x,R /2))}\big),
\end{align}
as $\bar r< R$. Finally, by Lemma \ref{cheeger} and (\ref{mes}) we can pick a constant $C_8=C_8(m)>0$ such that
$$
r_{m+1,4,\bar g}(\bar x)\wedge 1  \wedge R\geq C_8 \cdot R \wedge 1\wedge \sqrt{1/A_2}.
$$
This completes the proof of the pointwise inequality
\begin{align}\label{ppintwise-estimate}
|\Id \psi(x)|_g \leq \frac{C}{\min \big( 1, R , \sqrt{1/A_2} \big)} \Big(\sup_{ B_g(x, R )} \left|\Delta_g \psi\right|+\sup_{ B_g(x,R)}\left| \psi\right|\Big)
\end{align}
with  $R = R_{0} \wedge (\pi/\sqrt{A_{1}})/2$. Whence, to conclude the validity of the uniform estimate \eqref{uniform-estimate}, we note that if $y \in B_{g}(x,R_0/4)$, by the obvious inclusion $B_{g}(y,R_0/2) \subset B_{g}(x,R_{0})$, we have that the curvature conditions \eqref{curv-conditions2} still hold on $B_{g}(y,R_0/2)$. Therefore, the result follows by applying \eqref{ppintwise-estimate} on each $B_{g}(y,R_0/2)$.
\end{proof}

\appendix

\section{Proof of Lemma \ref{localisometry}}\label{appendix-geometric}
For the sake of completeness we include a proof of Lemma \ref{localisometry}.

The following notation will be used repeatedly: given a smooth Riemannian manifold $(M,g)$, a point $x\in M$ and a number $C>0$ we denote the centered Euclidean tangential ball as
$$
\IBB_g(x,C):= \{v|\>v\in T_{x}M\>, |v|_g< C\} \subset T_{x}M .
$$
Thus, if $(M,g)$ is complete, for every $C>0$ and $x \in M$ we have that 
$$
\exp_{g,x}|_{\IBB_{g}(x,C)} : \IBB_{g}(x,C) \longrightarrow B_{g}(x,C)
$$
is a smooth surjective map.\smallskip

Now, let $(M,g)$ be as in the statement of the lemma. By scaling $g$ we can assume $K=1$. By Rauch comparison, the conjugate radius of $(M,g)$ satisfies $\mathrm{conj}_g(x) \geq \pi$. Thus, for every $0<R < \pi\wedge R_0$, the exponential map,
$$
 \exp_{g,x}|_{\IBB_{g}(x,R)} :  T_{x}M \supset \IBB_{g}(x,R)\longrightarrow B_{g}(x, R)\subset M,
$$
is a smooth local diffeomorphism, hence altogether a smooth surjective local isometry with respect to the pull-back metric
$$\bar g_{R} = \exp_{x}^{\ast}g.$$
In particular,  $\sect_{\bar g_{R}} \leq 1$.
Note also that, in the identification $T_{0} T_{x}M = T_{x}M$, since $\Id_{0} \exp_{g,x} = \mathrm{id}$, we have
\[
(\bar g_{R})_{0} = g_{x}.
\]
Fix $0< R < \pi\wedge R_0$ and extend the Riemannian metric $\bar g_{R}$ outside $\IBB_{g}(x,R)$ so to obtain a smooth complete Riemannian manifold  $\bar M = (T_{x}M , \bar g)$. For instance, using a partition of unity, we can glue $\bar g_{R+2\epsilon}$ on $\IBB(x,R+2\epsilon)$, $R+2\epsilon < \pi \wedge R_{0}$, with $g_{\rr^{m}}$ on $T_{x}M \setminus \IBB(x,R+\epsilon )$.
Since
\[
\bar g = \bar g_{R} \, \text{ on } \IBB_{g}(x,R)\subset \bar M
\]
then
\[
\sect_{\bar g} \leq 1\, \text { on } \IBB(x, R) \subset \bar M.
\]
Moreover, for every $v \in T_{x}M = T_{0}T_{x}M$ with $|v|_{g_{x}} = |v|_{\bar g_{0}} = 1$, the line segment
$$\bar \gamma_{v}(t) = tv : [0,R) \to   \IBB(x,R)\subset \bar M$$
is the (unique) geodesic of $\bar M$ issuing from $0 \in \bar M$ with unit speed $v \in T_{0} \bar M = T_{x}M$.
Indeed, $\bar \gamma_{v}$ projects to the geodesic 
$$
\gamma_{v}(t) = \exp_{g,x} (tv): [0,R) \longrightarrow B_{g}(x,R) 
$$
of $M$ via the smooth locally isometric map $\exp_{x}|_{\IBB_{g}(x,R)}$.
In particular, no such two distinct geodesics $\bar \gamma_{v_{1}}, \bar \gamma_{v_{2}}$ can intersect inside $\IBB_{g}(x,R)$ away from $0$. Furthermore, the curvature condition on $\IBB_{g}(x,R) \subset \bar M$ implies that no point of $\IBB_{g}(x,R) \setminus \{0\}$ is conjugate to $0$ along $\bar \gamma_{v}$. It follows that $\bar \gamma_{v}(t)$ is minimizing in $\bar M$ for every $t \in [0, R)$ or, equivalently,
\[
r_{\mathrm{inj},\bar g}(0) \geq R.
\]
Note that, by the local isometry property of $\exp_{g,x}|_{\IBB_{g}(x,R)}$, for every $v \in \IBB_{g}(x,R)\subset T_{0}\bar M= T_{x}M$ the geodesic 
$$
\bar \gamma_{v} : [0,1] \longrightarrow \IBB_{g}(x,R)\subset \bar M
$$
 has length
\[
 \ell_{\bar g} (\bar \gamma_{v}) = \ell_{g} (\gamma_{v}) = |v|_{g}.
\]
Since $\bar \gamma_{v}$ is $\bar g$-minimizing, it follows
\[
\Id_{\bar g}(v,0) =  |v|_{g}, \, \text {on } \IBB_{g}(x,R),
\]
proving the inclusion $\IBB_{g}(x,R) \subseteq B_{\bar g}(0,R)$. \\
On the other hand, if $v \in \bar M \setminus \IBB_{g}(x,R)$, take a unit speed minimizing geodesic 
$$
\bar \gamma : [0, \bar d] \longrightarrow \bar M
$$
such that $\bar \gamma(0) = 0$ and $\bar \gamma (\bar d) = v$. Let $w = \dot {\bar \gamma} (0)$ and recall that $|w|_{\bar g_{0}} =  |w|_{g_{x}}$. Then, $|w|_{g_{x}} =1$ and $\bar \gamma_{w}(t) \in \IBB_{g}(x,R)$ for every $0 \leq t <R$. By uniqueness, we deduce that $\bar \gamma (t) = \bar \gamma_{w}(t)$ for every $0 \leq t < R$. This implies
\[
\bar d = \Id_{\bar g}(v,0) \geq \ell_{\bar g}(\bar \gamma_{w}|_{[0,R)}) = R.
\]
It follows that
\[
B_{\bar g}(0,R) = \IBB_{g}(x,R)
\]
and the Lemma is proved with $F_{g,x,R}: = \exp_{g,x}|_{\IBB_{g}(x,R)}$ and $\bar x: =0$.

\end{document}